\theoremstyle{plain}
	\newtheorem{definition}{Definition}[section]
	\newtheorem{lemma}[definition]{Lemma}
	\newtheorem{proposition}[definition]{Proposition}
\renewcommand{\mathbb}{\mathbbm}                     
\renewcommand{\epsilon}{\varepsilon}                 
\renewcommand{\phi}{\varphi}
\renewcommand{\le}{\leqslant}
\renewcommand{\ge}{\geqslant}
\newcommand{\origsetminus}{} \let\origsetminus=\setminus  
\renewcommand{\setminus}{\!\origsetminus\!}
\newcommand{\origfoo}{} \let\origfoo=\sqrt           
\renewcommand{\sqrt}[1]{\origfoo{#1}\;}
\DeclareMathOperator{\R}{{\mathbb R}}                
\newcommand{\ud}{{\rm d}}
\DeclareMathOperator{\sinc}{sinc}
\title{The paper ``On the constant in a transference inequality for the vector-valued Fourier transform'' revisited}
\author{Dion Gijswijt \& Jan van Neerven}
\date\today
\begin{document}
\maketitle

\begin{abstract}
The standard proof of the equivalence of Fourier type on $\mathbb R^d$ and on the torus $\mathbb T^d$
is usually stated in terms of an implicit constant which can be expressed in terms of the global minimiser of the functions
\[
f_r(x)=\sum_{m\in\mathbb{Z}}\left|\frac{\sin(\pi(x+m))}{\pi(x+m)}\right|^{2r},\qquad x\in [0,1], \ r\ge 1.
\]
The aim of this note is to provide a short proof of a result of the authors
\cite{GvN} which states that each $f_r$ takes a global minimum at the point $x = \frac12$.
\end{abstract}

\section*{Introduction}

For $x\in\mathbb{R}$ define
\(
\sinc(x):=(\sin x)/x,
\)
with the understanding that $\sinc(0)=1$, and
\(
h(x):=\sinc^2(\pi x).
\)
For $r\ge 1$ and $x\in \mathbb R$ set
\[
f_r(x):=\sum_{m\in\mathbb{Z}} (h(x+m))^r
=\sum_{m\in\mathbb{Z}} \left|\frac{\sin(\pi(x+m))}{\pi(x+m)}\right|^{2r}.
\]
The series converges absolutely for every $r\ge 1$ since $0\le h(x)\lesssim  1/x^2$ as $|x|\to\infty$. See Figure \ref{fig1}, which is taken from \cite{GvN}.
\begin{figure}
\begin{center}
\includegraphics[width=7cm]{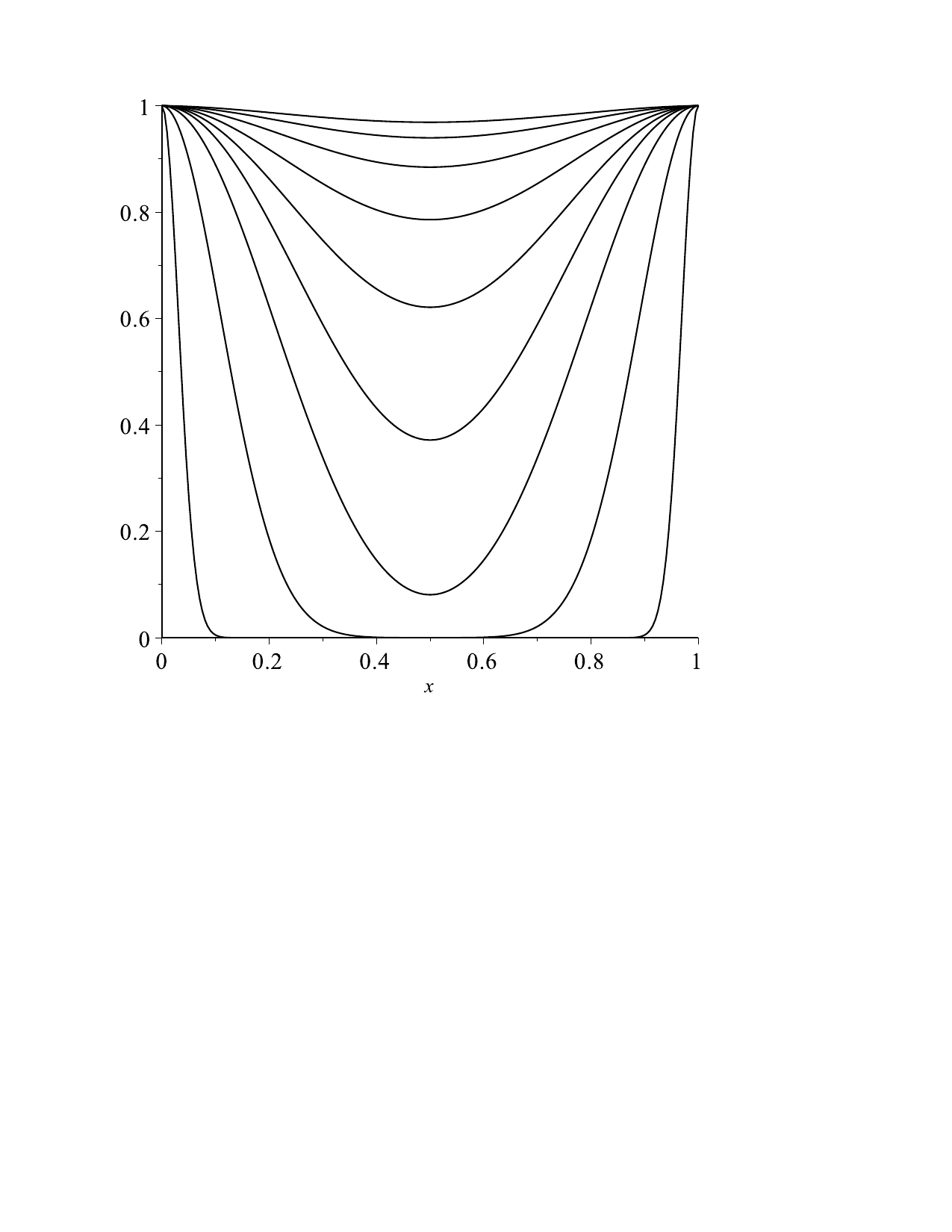}
\end{center}
\caption{A plot of $f_r$, where $r=1.02^{k}$ for $k=1,2,4,\dots,256$.\label{fig1}}
\end{figure}
The main result of \cite{GvN} is the following proposition, which plays a key role in the application, in the same paper, to bounds for the Fourier type $p$ constant on the torus $\mathbb{T}^d$ by the Fourier type $p$ constant on $\R^d$, where $p\in [1,2]$ and $\frac1p+\frac1{2r} =1$.

\begin{proposition}\label{prop}
For every real number $r\ge 1$, the function $f_r:[0,1]\to\mathbb{R}$ has a global minimum at $x=\tfrac12$.
\end{proposition}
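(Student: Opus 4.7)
My plan is to use the identity $f_1\equiv 1$ (which is the partial fraction expansion in disguise) to rewrite $f_r$ in a scale-free way, and then reduce the claim to a Schur-convexity statement on the probability simplex.

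First, $f_r$ is $1$-periodic, even, and satisfies $f_r(1-x)=f_r(x)$ by the reindexing $m\mapsto -1-m$ in the sum. It therefore suffices to prove $f_r(x)\ge f_r(1/2)$ on $[0,1/2]$. The identity $\sin(\pi(x+m))=(-1)^m\sin(\pi x)$ gives $h(x+m)=\sin^2(\pi x)/[\pi(x+m)]^2$, hence
\[
f_r(x)=\frac{\sin^{2r}(\pi x)}{\pi^{2r}}\,S_r(x),\qquad S_r(x):=\sum_{m\in\mathbb{Z}}(x+m)^{-2r}.
\]
The partial-fraction identity $S_1(x)=\pi^2/\sin^2(\pi x)$ (which is nothing but $f_1\equiv 1$) absorbs the prefactor and produces the scale-free representation
\[
f_r(x)=\frac{S_r(x)}{S_1(x)^r}=\sum_{m\in\mathbb{Z}}p_m(x)^r,\qquad p_m(x):=\frac{(x+m)^{-2}}{S_1(x)},
\]
so that $f_r(x)$ is the $r$-th power sum of the probability distribution $(p_m(x))_{m\in\mathbb{Z}}$.

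Since $(q_m)\mapsto\sum_m q_m^r$ is Schur-convex on the probability simplex for every $r\ge 1$, the desired inequality $f_r(x)\ge f_r(1/2)$ will follow from the majorization $(p_m(1/2))_m\prec(p_m(x))_m$ for every $x\in[0,1/2]$. At $x=1/2$ the distribution satisfies $p_m(1/2)=p_{-1-m}(1/2)=4/[\pi^2(2m+1)^2]$: it is the most balanced member of the family, with two equal largest values; for $x\ne 1/2$ the symmetry breaks and mass concentrates on the $m$ minimizing $|x+m|$. At the extreme $x=0$ we get a Dirac mass at $m=0$, which majorizes everything.

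The main obstacle is establishing the majorization at intermediate $x$: for each $k\ge 1$, the sum of the top $k$ values of $(p_m(x))_m$ must dominate the analogous sum at $x=1/2$. For $k=1$ this is the monotonicity $\sinc^2(\pi x)\ge 4/\pi^2$ on $[0,1/2]$, and $k=2$ reduces to an elementary algebraic inequality. For the general case I would exploit the Hurwitz-zeta integral representation
\[
S_r(x)=\frac{1}{\Gamma(2r)}\int_0^\infty \frac{u^{2r-1}\cosh((x-\tfrac12)u)}{\sinh(u/2)}\,du,
\]
which exhibits $S_r$ as convex in $x$ and minimized at $1/2$, together with the fact that the desired inequality collapses to equality at $r=1$, in order to reduce each partial-sum estimate to a tractable analytic comparison. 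Converting this structural information into a rigorous proof of all $k$-th inequalities, uniformly in $r\ge 1$, is the technical heart of the argument.
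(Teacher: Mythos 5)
Your setup is sound and closely parallels the paper's underlying idea: since $f_1\equiv 1$, the numbers $p_m(x)=h(x+m)$ form a probability distribution, and comparing $\sum_m p_m(x)^r$ with $\sum_m p_m(\tfrac12)^r$ via convexity/majorization is exactly the right kind of move (the paper's ``one-crossing'' lemma is itself a majorization-type transfer argument). But the proposal has a genuine gap at precisely the point you flag as ``the technical heart'': the majorization $(p_m(\tfrac12))_m\prec(p_m(x))_m$, i.e.\ the top-$k$ partial-sum inequalities for \emph{every} $k$, is never proved. You verify $k=1$, assert $k=2$ is elementary (it is in fact the nontrivial statement that $h(x)+h(x-1)$ is minimized at $x=\tfrac12$, which the paper has to quote from \cite[Lemma~5(i)]{GvN}), and for general $k$ you offer the Hurwitz-zeta representation showing that $S_r(x)=\sum_m(x+m)^{-2r}$ is convex in $x$ and minimized at $\tfrac12$. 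That tool does not address the problem: it concerns the \emph{full} sum, not ordered partial sums, and since $f_r(x)=\pi^{-2r}\sin^{2r}(\pi x)\,S_r(x)$ the prefactor is \emph{maximized} at $\tfrac12$, so the two effects compete and no conclusion about $f_r$, let alone about top-$k$ sums, follows. (Also, the worry about uniformity in $r$ is misplaced: majorization plus Schur-convexity of $u\mapsto\sum u_k^r$ handles all $r\ge1$ simultaneously, which is the whole point of the reduction; what is missing is purely the $r$-free combinatorial/analytic fact.)

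For what it is worth, the majorization you want is true, but the natural way to prove it is essentially the paper's route: pair the terms $m$ and $-(m+1)$, set $s_m(x)=h(x+m)+h(x-(m+1))$, and show that $s_0$ is minimized at $x=\tfrac12$ while each $s_m$, $m\ge1$, is maximized there. For $x\in[0,\tfrac12]$ the values $h(x+m)$ are already ordered by $|x+m|$, so the even-$k$ partial sums reduce exactly to these two lemmas, and the odd-$k$ ones need in addition the easy bound $h(x-(j+1))\le h(j+\tfrac12)$. The paper sidesteps the odd-$k$ bookkeeping altogether by applying the convexity inequality $a^r+b^r\ge 2^{1-r}(a+b)^r$ (with equality at $x=\tfrac12$) to each pair, after which only a single-threshold comparison of the sequences $(s_m(x))_{m\ge0}$ and $(s_m(\tfrac12))_{m\ge0}$ is needed. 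So your plan can be completed, but the two monotonicity lemmas about $s_0$ and $s_m$ (or an equivalent substitute) are the actual content of the proof, and they are absent from the proposal.
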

The proof in \cite{GvN} is elementary, but long and intricate. The purpose of the present note is to present a substantially simpler proof of this result.

\section{Proof of Proposition \ref{prop}}

For $r=1$ one has the following elementary identity.

\begin{lemma}\label{lem:r=1} $\sum_{m\in\mathbb{Z}} h(x+m) = 1$.
\end{lemma}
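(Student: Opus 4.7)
The plan is to reduce the identity to the classical Mittag--Leffler expansion
\[
\sum_{m\in\mathbb{Z}} \frac{1}{(x+m)^2} = \frac{\pi^2}{\sin^2(\pi x)}, \qquad x\in\mathbb{R}\setminus\mathbb{Z}.
\]
The key observation is that $\sin(\pi(x+m)) = (-1)^m\sin(\pi x)$ for every $m\in\mathbb{Z}$, so $\sin^2(\pi(x+m)) = \sin^2(\pi x)$ is independent of $m$. This lets me pull the sine squared out of the sum:
\[
\sum_{m\in\mathbb{Z}} h(x+m) = \sum_{m\in\mathbb{Z}} \frac{\sin^2(\pi(x+m))}{\pi^2(x+m)^2} = \frac{\sin^2(\pi x)}{\pi^2} \sum_{m\in\mathbb{Z}} \frac{1}{(x+m)^2},
\]
which equals $1$ by the displayed Mittag--Leffler identity, for every $x\in\mathbb{R}\setminus\mathbb{Z}$.

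For $x\in\mathbb{Z}$ the series $\sum_{m} h(x+m)$ collapses to the single nonzero term $h(0)=1$, since $\sinc$ vanishes at every nonzero integer; thus the identity extends to all of $\mathbb{R}$. Alternatively one can invoke continuity, since the introduction already observes that the series converges absolutely with a summable $1/x^2$ tail bound, so the partial sums converge uniformly on compact subsets of $\mathbb{R}$.

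The Mittag--Leffler identity for $\pi^2/\sin^2(\pi x)$ is a textbook result; it can be established, for instance, by checking that both sides are $1$-periodic meromorphic functions with matching principal parts at the integers and bounded at infinity in horizontal strips, or alternatively by Poisson summation applied to the tent function (whose Fourier transform is $\sinc^2$). I do not anticipate any real obstacle in this argument; the only piece of bookkeeping is the trivial integer case.
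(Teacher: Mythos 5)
Your proof is correct, but it takes a different route from the paper. You factor out $\sin^2(\pi(x+m))=\sin^2(\pi x)$ and reduce the claim to the classical Mittag--Leffler expansion $\sum_{m\in\mathbb{Z}}(x+m)^{-2}=\pi^2/\sin^2(\pi x)$, handling integer $x$ separately (or by continuity, which is justified by the locally uniform tail bound); all of these steps are sound. The paper instead proves the identity from scratch by a Parseval argument: it computes the Fourier coefficients of $g_x(t)=e^{2\pi i x t}$ on $[0,1]$, observes that $|\widehat g_x(m)|^2=h(x-m)$, and sums, so the statement becomes $\|g_x\|_{L^2(\mathbb{T})}^2=1$. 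Your approach buys brevity if the cotangent-type expansion is taken as known, and it isolates the purely trigonometric content of the lemma; the paper's argument is self-contained modulo Parseval and sits more naturally in the Fourier-analytic context of the article. Note also that your suggested derivation of the Mittag--Leffler identity via Poisson summation applied to the tent function (whose Fourier transform is $\sinc^2$) is essentially the Parseval computation in disguise, so the two routes are closer than they first appear.
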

\begin{proof}
For fixed $x\in\mathbb{R}$ define
\(g_x(t):=e^{2\pi i x t}\) for $t\in[0,1]$.
If $x$ is an integer and $m=x$, the $m$-th Fourier coefficient of $g_x$ is given by
$$
\widehat g_x(m)
=\int_0^1 e^{2\pi i(x-m)t}\,\ud t = \int_0^1 1\,\ud t= 1,
$$
so $|\widehat g_x(m)|^2= 1 = h(0) = h(x-m)$ in that case.
In all other cases,
\[
\widehat g_x(m)
=\int_0^1 e^{2\pi i(x-m)t}\,\ud t
=\frac{e^{2\pi i(x-m)}-1}{2\pi i(x-m)}
=e^{\pi i(x-m)}\,\frac{\sin(\pi(x-m))}{\pi(x-m)}.
\]
Hence $|\widehat g_x(m)|^2= \sinc^2(\pi(x-m)) = h(x-m)$.

Parseval's identity now gives
\[
\sum_{m\in\mathbb{Z}} h(x-m)=
\sum_{m\in\mathbb{Z}} |\widehat g_x(m)|^2=\|g_x\|_{L^2(\mathbb{T})}^2=\int_0^1 |e^{2\pi i x t}|^2\,\ud t=\int_0^1 1\,\ud t=1.
\]
Replacing $m$ by $-m$ gives the result.
\end{proof}

Let $a,b\ge 0$ and $r\ge 1$. By the convexity of $t\mapsto t^r$ on $\mathbb R_+$ we have
\(
\frac{a^r+b^r}{2}\ge \bigl(\frac{a+b}{2}\bigr)^r
\)
and therefore
\[
a^r+b^r \ge 2^{1-r}(a+b)^r,
\]
with equality if and only if $a=b$.
Applying this to $a_m = h(x+m)$ and $b_m = h(x-(m+1))$, with $m\ge 0$, we obtain
\begin{equation}\label{eq:convexity-step}
f_r(x)=\sum_{m\ge 0}\Big(h(x+m)^r+h(x-(m+1))^r\Big)
\ge 2^{1-r}\sum_{m\ge 0} (s_m(x))^r,
\end{equation}
where for $m=0,1,2,\dots$ we define
\[
s_m(x):=h(x+m)+h\bigl(x-(m+1)\bigr).
\]
At $x=\tfrac12$ we have
$$h(\tfrac12+m)=  \left(\frac{\sin(\pi (\tfrac12+m))}{\pi (\tfrac12+m)}\right)^2 =
\left(\frac{\sin(\pi (\tfrac12-(m+1)))}{\pi (\tfrac12-(m+1))}\right)^2 =
h(\tfrac12-(m+1)),$$ which gives the equality
\begin{equation}\label{eq:fr-half}
f_r(\tfrac12)=2^{1-r}\sum_{m\ge 0} s_m(\tfrac12)^r.
\end{equation}
Therefore, to prove that $f_r(x)\ge f_r(\tfrac12)$ it suffices to prove
\begin{equation}\label{eq:s-sum-goal}
\sum_{m\ge 0} (s_m(x))^r \ge \sum_{m\ge 0} s_m(\tfrac12)^r.
\end{equation}

The key lemma is closely related to Lemma 3 of \cite{GvN}.

\begin{lemma}[One-crossing implies convex dominance]\label{lem:one-crossing}
Let $g:\mathbb{R}_+\to\mathbb{R}_+$ be nondecreasing and convex. Let $t\ge 0$, and let $x_1,\dots,x_n\ge 0$ and $y_1,\dots,y_n\ge 0$ satisfy
\begin{enumerate}[\rm(i)]
\item $\sum_{k=1}^n x_k=\sum_{k=1}^n y_k$;
\item for every $k\in\{1,\dots,n\}$,
$x_k\le y_k$ if
$y_k<t$, and $x_k\ge y_k$ if $y_k\ge t$.
\end{enumerate}
Then $$\sum_{k=1}^n g(x_k)\ge \sum_{k=1}^n g(y_k).$$
\end{lemma}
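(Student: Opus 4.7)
The plan is to reduce the inequality to a pointwise comparison by subtracting off a suitable linear functional. First I would select any subgradient $\lambda\in\partial g(t)$ (for instance $\lambda=g'_+(t)$, which exists since $g$ is convex on $\R_+$; the degenerate case $t=0$ is trivial, as condition (ii) then reads $x_k\ge y_k$ for all $k$, and (i) forces equality). Setting $G(s):=g(s)-\lambda s$ yields a convex function on $\R_+$ with $0\in\partial G(t)$, so $G$ attains its minimum at $t$. In particular, $G$ is nonincreasing on $[0,t]$ and nondecreasing on $[t,\infty)$.

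The key step would then be to establish $G(x_k)\ge G(y_k)$ for each $k$. If $y_k<t$, hypothesis (ii) gives $x_k\le y_k<t$, so both arguments lie in the nonincreasing part of $G$; since $x_k\le y_k$, this yields $G(x_k)\ge G(y_k)$. Symmetrically, if $y_k\ge t$, hypothesis (ii) gives $x_k\ge y_k\ge t$, both arguments lie in the nondecreasing part of $G$, and $x_k\ge y_k$ again yields $G(x_k)\ge G(y_k)$. Summing over $k$ and using (i) to cancel the linear contribution, $\lambda\sum_k(x_k-y_k)=0$, leaves precisely $\sum_k g(x_k)\ge\sum_k g(y_k)$.

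I do not expect any real obstacle: the whole proof collapses to a few lines once one has the idea of introducing $G$ and exploiting that $t$ is its minimizer. The only nontrivial input is the existence of a subgradient at $t$, which is standard for convex functions on $\R_+$. It is worth noting that the hypothesis that $g$ is nondecreasing, which would guarantee $\lambda\ge 0$, is not actually used in this argument; convexity of $g$ suffices.
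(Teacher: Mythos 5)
Your proof is correct, and it takes a genuinely different route from the paper. The paper argues by a discrete ``mass transfer'': starting from $y$, it repeatedly moves an amount $\delta$ from a coordinate in $L=\{k: y_k<t\}$ to a coordinate in $H=\{k: y_k\ge t\}$, using only the two-point convexity inequality $g(a-\delta)+g(b+\delta)\ge g(a)+g(b)$ for $0<\delta\le a\le b$, and shows the sum $\sum_k g(\cdot)$ never decreases until $x$ is reached after finitely many steps. You instead subtract the supporting line at the threshold: with $\lambda\in\partial g(t)$ and $G(s)=g(s)-\lambda s$, the point $t$ minimises $G$, so $G$ is nonincreasing on $[0,t]$ and nondecreasing on $[t,\infty)$, and hypothesis (ii) immediately gives the pointwise bound $G(x_k)\ge G(y_k)$; hypothesis (i) then cancels the linear term. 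Your argument is shorter, needs no induction or termination argument, and makes visible that the hypotheses that $g$ be nondecreasing and $\mathbb{R}_+$-valued are not needed (your handling of the boundary case $t=0$, where (i) and (ii) force $x_k=y_k$, is also correct); in fact the paper's transfer argument does not really use monotonicity of $g$ either, but this is far less apparent there. What the paper's route buys is that it is entirely elementary --- no one-sided derivatives or subgradients --- and it displays the ``one-crossing implies transfer from low to high coordinates'' structure explicitly, which is the intuition behind the lemma's name.
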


\begin{proof}
If $x_k = y_k$ for all $1\le k\le n$ there is nothing to prove, so we may assume that $x\neq  y$, where
$x = (x_1,\dots,x_n)$ and $y = (y_1,\dots,y_n)$.

\smallskip
\emph{Step 1} --
Let $a,b > 0$ with $a\le b$, and let $\delta\in(0,a]$. For a convex function $g:\mathbb{R}_+\to\mathbb{R}$ one has
\[
\frac{g(a)-g(a-\delta)}{\delta}\le \frac{g(b+\delta)-g(b)}{\delta}
\]
and therefore
\begin{equation}\label{eq:transfer}
g(a-\delta)+g(b+\delta)\ge g(a)+g(b).
\end{equation}

{\em Step 2} --
Let $L:=\{k:\ y_k<t\}$ and $H:=\{k:\ y_k\ge t\}$. By assumption we have
$x_k\le y_k$ for $k\in L$, and $x_k\ge y_k$ for $k\in H$,
and $\sum_{k=1}^n x_k=\sum_{k=1}^n y_k$.
By assumption (i) we have $y_k-x_k\ge 0$ for $k\in L$, $x_k-y_k\ge 0$ for $k\in H$, and
\[
\sum_{k\in L} y_k-x_k  = \sum_{k\in H} x_k-y_k.
\]

In the next two steps of the proof,
we will obtain the vector $x=(x_1,\dots,x_n)$ from $y=(y_1,\dots,y_n)$ by finitely many ``mass transfers''
from indices in $L$ to indices in $H$, in such a way that each transfer does not decrease $\sum g(\cdot)$.

\smallskip
\emph{Step 3} -- Set $z^{(0)}:=y$. Proceeding by induction, suppose $z^{(0)},\dots, z^{(m)}\in \R_+^n$ have been defined in such a way that:
\begin{itemize}
\item[\rm(a)] $\sum_{j=1}^n z_j^{(k)}=\sum_{j=1}^n y_j=\sum_{j=1}^n x_j$ for all $k=0,\dots,m$;
\item[\rm(b)] $x_i\le z_i^{(k)} \le y_i$ for all $i\in L$ and $y_j\le z_j^{(k)} \le x_j$ for all $j\in H$ and $k=0,\dots,m$.
\end{itemize}
If $z^{(m)}= x$ we terminate the procedure.
We claim that if $z^{(m)}\neq x$, then there exists $i\in L$ and $j\in H$ such that
$z_i^{(m)}>x_i$ and $z_j^{(m)}<x_j$.
Indeed, since z$^{(m)}\neq x$, and $\sum_i z_i^{(m)} = \sum_i x_i$, there are indices $i, j$ such that $z_i^{(m)} > x_i$ and $z_j^{(m)} < x_j$.
By condition (b) it follows that $i\in L$ and $j\in H$. This proves the claim.

Put
\[
\delta:=\min\{\,z^{(m)}_i-x_i,\ x_j-z^{(m)}_j\,\}>0,
\]
and define $z^{(m+1)}\in\mathbb{R}_+^n$ by
\[
z^{(m+1)}_i:=z^{(m)}_i-\delta,\qquad z^{(m+1)}_j:=z^{(m)}_j+\delta,\qquad
z^{(m+1)}_k:=z^{(m)}_k\ (k\notin\{i,j\}).
\]
Then (a) and (b) hold for all $k = 0,\dots, m+1$.
Since either $z^{(m+1)}_i=x_i$ or $z^{(m+1)}_j=x_j$, after finitely many steps (say, after $M$ steps)
the procedure terminates with $z^{(M)}=x$.

\smallskip
\emph{Step 4} --
At every step in the above procedure we have $z^{(m)}_i\le y_i<t\le y_j\le z^{(m)}_j$ for $i\in L$ and $j\in H$.
Applying \eqref{eq:transfer} with $a=z^{(m)}_i$, $b=z^{(m)}_j$, we obtain
\[
g(z^{(m+1)}_i)+g(z^{(m+1)}_j)
\;=\;
g(z^{(m)}_i-\delta)+g(z^{(m)}_j+\delta)
\;\ge\;
g(z^{(m)}_i)+g(z^{(m)}_j).
\]
All other coordinates are unchanged, so
\[
\sum_{k=1}^n g(z^{(m+1)}_k)\ge \sum_{k=1}^n g(z^{(m)}_k).
\]
Iterating from $m=0$ to $M-1$, we obtain
\[
\sum_{k=1}^n g(x_k)=\sum_{k=1}^n g(z^{(M)}_k)\ge \sum_{k=1}^n g(z^{(0)}_k)=\sum_{k=1}^n g(y_k),
\]
which is the desired conclusion.
\end{proof}

The elementary proof of the following lemma is given in \cite[Lemma~5(i)]{GvN}:

\begin{lemma}\label{lem:m0}
The function $x\mapsto s_0(x)=h(x)+h(x-1)$ has a global minimum on $[0,1]$ at $x=\tfrac12$.
\end{lemma}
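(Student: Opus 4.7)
The plan is as follows. First, exploit the symmetry $s_0(x)=s_0(1-x)$ and the explicit formula for $s_0$ to reduce the inequality $s_0(x)\ge s_0(\tfrac12)$ to a single trigonometric inequality in one variable $w\in[0,1]$. Second, verify that inequality by a careful calculus argument.

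For the reduction, since $\sin^2(\pi(x-1))=\sin^2(\pi x)$, one has
\[
s_0(x)=\frac{\sin^2(\pi x)}{\pi^2}\Bigl(\frac{1}{x^2}+\frac{1}{(1-x)^2}\Bigr) \qquad (x\in(0,1)),
\]
so $s_0(\tfrac12)=8/\pi^2$ and $s_0(x)=s_0(1-x)$. Parametrising the right half $x\in[\tfrac12,1]$ by $x=(1+w)/2$ with $w\in[0,1]$, one has $\sin(\pi x)=\cos(\pi w/2)$ and $\frac{1}{x^2}+\frac{1}{(1-x)^2}=\frac{8(1+w^2)}{(1-w^2)^2}$, so the inequality $s_0(x)\ge 8/\pi^2$ becomes
\[
(1+w^2)\cos^2(\pi w/2)\ge(1-w^2)^2 \quad \text{for all } w\in[0,1].
\]

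Let $F(w)$ denote the difference between the two sides. Using $\cos^2(\pi w/2)=(1+\cos(\pi w))/2$ rewrites this as $2F(w)=(1+w^2)\cos(\pi w)+5w^2-2w^4-1$. A Taylor expansion at each endpoint gives $F(w)=\tfrac{12-\pi^2}{4}w^2+O(w^4)$ as $w\to 0$ and $F(w)=\tfrac{\pi^2-8}{2}(1-w)^2+O((1-w)^3)$ as $w\to 1$; both leading coefficients are strictly positive, so $F>0$ in punctured neighbourhoods of the endpoints, with $F'>0$ just to the right of $0$ and $F'<0$ just to the left of $1$.

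The main obstacle is to extend these boundary estimates to positivity on the whole interval $[0,1]$. I would handle this by analysing
\[
F'(w)=w\bigl[5-4w^2+\cos(\pi w)\bigr]-\tfrac{\pi}{2}(1+w^2)\sin(\pi w)
\]
and showing that the ratio of its two non-negative summands is strictly monotone on $(0,1)$; this forces $F'$ to have a unique zero in the interior. Together with $F(0)=F(1)=0$ and the boundary sign information, one concludes that $F$ is increasing on some $(0,w^*)$ and decreasing on $(w^*,1)$, hence $F\ge 0$ throughout $[0,1]$. Verifying the monotonicity of this ratio is the delicate elementary calculation that constitutes \cite[Lemma~5(i)]{GvN}.
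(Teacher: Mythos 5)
Your reduction is sound: the explicit formula for $s_0$ (the case $m=0$ of \eqref{eq:s-explicit}), the substitution $x=(1+w)/2$, the target inequality $(1+w^2)\cos^2(\pi w/2)\ge(1-w^2)^2$ on $[0,1]$, the expression for $F'$, and the Taylor expansions at the two endpoints are all correct, and the logical scheme (at most one sign change of $F'$, together with $F(0)=F(1)=0$ and the signs of $F'$ near the endpoints, gives $F\ge 0$) is valid \emph{conditionally} on the claim that the ratio of the two nonnegative summands of $F'$ is strictly monotone on $(0,1)$.

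That claim, however, is exactly where the entire difficulty of the lemma is concentrated, and you do not prove it; you assert it and then wave its verification off as ``the delicate elementary calculation that constitutes \cite[Lemma~5(i)]{GvN}''. That attribution is not something you can rely on: the cited lemma proves the statement of Lemma~\ref{lem:m0} itself by its own elementary argument, not your ratio-monotonicity statement, so as written nothing in your proposal establishes the decisive step. Nor is the step routine: near $w=1$ both $w\bigl[5-4w^2+\cos(\pi w)\bigr]$ and $\tfrac{\pi}{2}(1+w^2)\sin(\pi w)$ vanish to first order, so the logarithmic derivative of their ratio involves a cancellation of two terms of size $1/(1-w)$, and controlling the sign after this cancellation requires a careful estimate comparable in effort to proving $F\ge 0$ directly (numerically the ratio does decrease from $12/\pi^2$ to $8/\pi^2$, so the plan is plausible, but plausibility is all you have). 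Note finally that the paper itself does not reprove this lemma either --- it simply quotes \cite[Lemma~5(i)]{GvN} --- so if your intention is to defer the hard work to that reference, the honest form of your argument is that citation alone; inserting an unproven intermediate claim and ascribing it to the reference leaves a genuine gap.
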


We continue with a simplified proof of \cite[Lemma 5(ii)]{GvN}:

\begin{lemma}\label{lem:mge1}
For every $m\ge 1$, the function $x\mapsto s_m(x)$ has a global maximum on $[0,1]$
at $x=\tfrac12$.
\end{lemma}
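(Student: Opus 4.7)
The plan is to reduce the desired inequality $s_m(x) \le s_m(\tfrac12)$ to an elementary polynomial estimate by substituting $u = x - \tfrac12$ and then invoking the classical bound $\sin(\pi u) \ge 2u$ on $[0, \tfrac12]$. The first step is to pull out a common sine factor: since $\sin^2$ has period $1$, we have $\sin^2(\pi(x+m)) = \sin^2(\pi x) = \sin^2(\pi(x-m-1))$, which gives
\[
s_m(x) = \frac{\sin^2(\pi x)}{\pi^2}\left[\frac{1}{(x+m)^2} + \frac{1}{(m+1-x)^2}\right].
\]

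Next I would set $u := x - \tfrac12$ (so $u \in [-\tfrac12,\tfrac12]$) and $C := m + \tfrac12 \ge \tfrac32$. The identities $\sin(\pi x) = \cos(\pi u)$, $(x+m)(m+1-x) = C^2 - u^2$, and $(x+m)^2 + (m+1-x)^2 = 2(C^2+u^2)$ convert $s_m$ into the compact form
\[
s_m(x) = \frac{2 \cos^2(\pi u)(C^2 + u^2)}{\pi^2(C^2-u^2)^2}, \qquad s_m(\tfrac12) = \frac{2}{\pi^2 C^2}.
\]
Clearing denominators and using $\cos^2(\pi u) = 1 - \sin^2(\pi u)$, straightforward algebra reduces the inequality $s_m(x) \le s_m(\tfrac12)$ to
\[
C^2(C^2 + u^2)\sin^2(\pi u) \;\ge\; u^2(3 C^2 - u^2).
\]

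The key analytic input is $\sin(\pi u) \ge 2u$ for $u \in [0, \tfrac12]$, an immediate consequence of the concavity of $\sin$ on $[0, \pi/2]$ (the chord from $(0,0)$ to $(\pi/2, 1)$ lies below the graph). Squaring yields $\sin^2(\pi u) \ge 4u^2$, and substituting this into the previous display reduces the whole matter to
\[
4 C^2(C^2 + u^2) \ge 3 C^2 - u^2, \qquad \text{i.e.,}\qquad C^2(4C^2 - 3) + u^2(4C^2 + 1) \ge 0,
\]
which is manifest since $C \ge \tfrac32$ makes both summands nonnegative.

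The main obstacle I anticipated was finding the right reformulation rather than any hard estimate. Attacking $s_m'(x)$ head-on fails because the sine factor and the rational factor have opposite monotonicities on $[0, \tfrac12]$, and trying to invoke concavity of $h$ on $[m, m+1]$ is doomed since a short computation shows $h''(m) > 0$, so $h$ is not concave there. The substitution $u = x - \tfrac12$ restores the symmetry $u \leftrightarrow -u$ and collapses the problem to a polynomial inequality that the cheap linear lower bound for $\sin$ disposes of, with all the slack concentrated precisely where the hypothesis $m \ge 1$ (equivalently $C \ge \tfrac32$) is used.
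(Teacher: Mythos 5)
Your proof is correct. You share the paper's first two moves exactly: the explicit formula $s_m(x)=\frac{\sin^2(\pi x)}{\pi^2}\bigl(\frac{1}{(x+m)^2}+\frac{1}{(m+1-x)^2}\bigr)$ and the symmetric substitution $u=x-\tfrac12$, $C=m+\tfrac12$. Where you part ways is the decisive step: the paper differentiates, showing that the logarithmic derivative of $\Phi_D(u)=\cos^2(\pi u)\bigl(\frac{1}{(D+u)^2}+\frac{1}{(D-u)^2}\bigr)$ is negative on $(0,\tfrac12)$ via $\tan(\pi u)\ge\pi u$ and a crude bound on the rational terms, whereas you prove the inequality $s_m(x)\le s_m(\tfrac12)$ directly, clearing denominators to the polynomial form $C^2(C^2+u^2)\sin^2(\pi u)\ge u^2(3C^2-u^2)$ and finishing with Jordan's inequality $\sin(\pi u)\ge 2u$ on $[0,\tfrac12]$, which reduces everything to $C^2(4C^2-3)+u^2(4C^2+1)\ge 0$; I checked the algebra ($(C+u)^2+(C-u)^2=2(C^2+u^2)$, $(C+u)^2(C-u)^2=(C^2-u^2)^2$, and the rearrangement after $\cos^2=1-\sin^2$) and it is right, and multiplying the reduced inequality by $u^2\ge 0$ handles $u=0$ without a case split. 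The trade-off: the paper's derivative argument yields strict decrease of $\Phi_D$ on $(0,\tfrac12)$ and hence uniqueness of the maximiser, while your calculus-free version gives only the non-strict bound $s_m(x)\le s_m(\tfrac12)$ --- but that is precisely what the lemma asserts and all that the proof of the Proposition uses, and in both arguments the hypothesis $m\ge 1$ enters in the same way, through $C\ge\tfrac32$ (the paper via $D^2-\tfrac14\ge 2$, you via $4C^2-3\ge 0$).
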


\begin{proof}
Using the identity $\sin(\pi(x+m))=(-1)^m\sin(\pi x)$ we may write, for $m\ge 0$,
\begin{equation}\label{eq:s-explicit}
s_m(x)=\frac{\sin^2(\pi x)}{\pi^2}\Bigl(\frac{1}{(m+x)^2}+\frac{1}{(m+1-x)^2}\Bigr),\qquad x\in[0,1].
\end{equation}
Set $u:=x-\tfrac12\in[-\tfrac12,\tfrac12]$ and $D:=m+\tfrac12\ge \tfrac12$. Then $\sin(\pi x)=\cos(\pi u)$ and
$m+x=D+u$, $m+1-x=D-u$. Neglecting the factor $\pi^{-2}$ for the moment, set
\[
\Phi_D(u):=\cos^2(\pi u)\Bigl(\frac{1}{(D+u)^2}+\frac{1}{(D-u)^2}\Bigr),
\qquad u\in[-\tfrac12,\tfrac12].
\]
This function is even in $u$, and $u=0$ is a critical point. For $u\in(0,\tfrac12)$ we compute the logarithmic derivative
\begin{equation}\label{eq:log-derivative}
\frac{\rm d}{{\rm d}u}\log \Phi_D(u)
= -2\pi\tan(\pi u) +\frac{2u}{D^2+u^2} +\frac{4u}{D^2-u^2}.
\end{equation}

Fix $m\ge 1$ and $u\in(0,\tfrac12)$. Then $D\ge \tfrac32$. Since $\tan(\pi u)\ge \pi u$ for $u\ge 0$, the first term in
\eqref{eq:log-derivative} satisfies $-2\pi\tan(\pi u)\le -2\pi^2 u$.
Moreover,
\[
\frac{2u}{D^2+u^2}+\frac{4u}{D^2-u^2}
\le
\frac{2u}{D^2-u^2}+\frac{4u}{D^2-u^2}
=
\frac{6u}{D^2-u^2}
\le
\frac{6u}{D^2-\tfrac14}.
\]
As $D\ge \tfrac32$, we have $D^2-\tfrac14\ge 2$, so the rational part of \eqref{eq:log-derivative} is bounded above by $3u$. Therefore,
\[
\frac{\rm d}{{\rm d}u}\log \Phi_D(u)\le -2\pi^2 u+3u = u(3-2\pi^2)<0.
\]
It follows that $\Phi_D(u)$ is strictly decreasing for $u\in (0,\tfrac12)$, and since $\Phi_D$ is even we infer that $u=0$ is the unique global maximum on
$[-\tfrac12,\tfrac12]$. Translating back, $x=\tfrac12$ is the global maximum of $s_m$ on $[0,1]$.
\end{proof}

\begin{proof}[Proof of Proposition \ref{prop}]
For integers $m\ge 0$ set $y_m:=s_m(\tfrac12)$ and $x_m:=s_m(x)$.
From
\[
y_m=s_m\!\left(\tfrac12\right)
=2h\!\left(m+\tfrac12\right)
=2\left(\frac{\sin(\pi(m+\tfrac12))}{\pi(m+\tfrac12)}\right)^2
=\frac{8}{\pi^2(2m+1)^2},
\qquad m\ge 0,
\]
we infer that the sequence $(y_m)_{m\ge0}$ is strictly decreasing.

By Lemma~\ref{lem:m0} we have $x_0\ge y_0$, and by Lemma~\ref{lem:mge1} we have $x_m\le y_m$ for all $m\ge 1$.
Moreover, by Lemma \ref{lem:r=1},
\begin{align}\label{eq:Parseval}
\sum_{m\ge 0} x_m = \sum_{m\ge 0} y_m = 1.
\end{align}
Fix $t\in \R$ with $y_1<t<y_0$; then $y_0\ge t$ and
$y_m\le y_1<t$ for all $m\ge1$.
For $N\ge 0$ set
\[
X_N:=\sum_{m\ge N+1} x_m,\qquad Y_N:=\sum_{m\ge N+1} y_m,
\]
and define
$\mathbf{x}^{(N)}:=(x_0,x_1,\dots,x_N,X_N)$ and $\mathbf{y}^{(N)}:=(y_0,y_1,\dots,y_N,Y_N).$
By \eqref{eq:Parseval},
\[
\sum_{k=0}^{N} x_k + X_N = \sum_{k=0}^{N} y_k + Y_N = 1.
\]
Moreover, for $m\ge1$ we have $x_m\le y_m$, hence also $X_N\le Y_N$ for every $N\ge 0$.
Since $Y_N\to 0$ as $N\to\infty$, there exists $N_0\ge 0$ such that $Y_N<t$ for all $N\ge N_0$.
Fix such an $N\ge N_0$. Then $y_0\ge t$, $y_m<t$ for $1\le m\le N$, and also $Y_N<t$.
The pointwise inequalities $x_0\ge y_0$, $x_m\le y_m$ for $1\le m\le N$, and $X_N\le Y_N$ show that
$\mathbf{x}^{(N)}$ and $\mathbf{y}^{(N)}$ satisfy the assumptions of Lemma~\ref{lem:one-crossing}
(with the same threshold $t$). Applying this lemma to $g(u)=u^r$, we obtain
\begin{align}\label{eq:approxN}
\sum_{m=0}^{N} x_m^r + X_N^r \ge \sum_{m=0}^{N} y_m^r + Y_N^r.
\end{align}
Letting $N\to\infty$ (with $N\ge N_0$) we have $X_N\to 0$ and $Y_N\to 0$, hence also $X_N^r\to 0$ and $Y_N^r\to 0$.
Moreover, the partial sums satisfy
$$\sum_{m=0}^{N} x_m^r \rightarrow \sum_{m\ge 0} x_m^r, \qquad
\sum_{m=0}^{N} y_m^r \rightarrow \sum_{m\ge 0} y_m^r$$ as $N\to\infty$,
since both sums $\sum_{m\ge 0} x_m^r$ and $\sum_{m\ge 0} y_m^r$ converge. Therefore, passing to the limit in
\eqref{eq:approxN}, we obtain $$\sum_{m\ge 0} x_m^r \ge \sum_{m\ge 0} y_m^r,$$
which is \eqref{eq:s-sum-goal}.
\end{proof}

\noindent {\em Acknowledgment} -- The present proof was found with the help of GPT5.2.

\end{document}